\documentclass[leqno]{article}
\usepackage[cp1250]{inputenc}
\usepackage{authblk}
\usepackage[english]{babel}

\usepackage{a4wide}
 
\usepackage{amsfonts}
\usepackage{graphicx}
\usepackage{epstopdf}
\usepackage{bm}
\usepackage{subfig}
\usepackage{amsthm} 
\usepackage{amsmath}
\usepackage{adjustbox}
% \usepackage{enumitem}

%%%%%%%%%%%%%%%%%%
% Dots after numbers

% \makeatletter
%  \renewcommand\@seccntformat[1]{\csname the#1\endcsname.\quad}
%  \renewcommand\numberline[1]{#1.\hskip0.7em}
% \makeatother

%%%%%%%%%%%%%%%%%%
% Dots after numbers of table, image and settings of font

% \captionsetup{labelfont=sc,labelsep=period}

%%%%%%%%%%%%%%%%%%
% Formula numeration

% \renewcommand{\thesubfigure}{\alph{subfigure}}

%%%%%%%%%%%%%%%%%%
% Gradient definition

\newcommand{\grad}{\nabla}

%%%%%%%%%%%%%%%%%%
% Volume integration

\newcommand{\dx}{\ d \boldsymbol{x}}

%%%%%%%%%%%%%%%%%%
% Line integration

\newcommand{\ds}{\ d s}

%%%%%%%%%%%%%%%%%%%%%%%%%%%%
% Theorem enviromentals

% \newtheorem{tw}{Theorem}
\newtheorem{lemma}{Lemma}
% \newtheorem{df}{Definition}
 
% \newtheorem{wn}{Corollary}

%%%%%%%%%%%%%%%%%%%%%%%%%%%%%%%%%%%%%%%%%%%%%%%%%%%%%%%%%%%%%%%%%%%%%%%%%%%%%%%%%%%%%%%%%

\begin{document}

\title{\textbf{Stabilised hybrid discontinuous Galerkin methods for the Stokes problem with non-standard boundary conditions
}} \author[1]{Gabriel R. Barrenechea \thanks{E-mail: gabriel.barrenechea@strath.ac.uk}}
\author[1,2]{Micha\l{} Bosy \thanks{E-mail: michal.bosy@unipv.it}}
\author[1,3]{Victorita Dolean \thanks{E-mail: victorita.dolean@strath.ac.uk}}
\affil[1]{\small \textit{Department of Mathematics and Statistics, University of Strathclyde, 26 Richmond Street, G1 1XH Glasgow, United Kingdom}}
\affil[2]{\small \textit{Dipartimento di Matematica ''F. Casorati'', Universit\'{a} degli Studi di Pavia, Via Adolfo Ferrata 5, 27100 Pavia, Italy}}
\affil[3]{\small \textit{University C\^ote d'Azur, CNRS, LJAD, 06108 Nice Cedex 02, France}}

\date{\today}

%\title{Stabilised hybrid discontinuous Galerkin methods for the Stokes problem with non-standard boundary conditions}
%%\titlerunning{Stabilised hdG methods for the Stokes problem with non-standard b. c.}
%\author{Gabriel R. Barrenechea \and Micha\l{} Bosy \and Victorita Dolean}
%\institute{Gabriel R. Barrenechea \at Department of Mathematics and Statistics, University of Strathclyde, 26 Richmond Street, G1 1XH Glasgow, United Kingdom, \email{gabriel.barrenechea@strath.ac.uk}
%\and Micha\l{} Bosy \at Department of Mathematics and Statistics, University of Strathclyde, 26 Richmond Street, G1 1XH Glasgow, United Kingdom, Dipartimento di Matematica ''F. Casorati'', Universit\'{a} degli Studi di Pavia, Via Adolfo Ferrata 5, 27100 Pavia, Italy, \email{michal.bosy@unipv.it}
%\and Victorita Dolean \at Department of Mathematics and Statistics, University of Strathclyde, 26 Richmond Street, G1 1XH Glasgow, United Kingdom, University C\^ote d'Azur, CNRS, LJAD, 06108 Nice Cedex 02, France, \email{victorita.dolean@strath.ac.uk}}

\maketitle

\abstract{
In several studies it has been observed that, when using stabilised 
$\mathbb{P}_k^{}\times\mathbb{P}_k^{}$ elements for both velocity and pressure, the error for the pressure
is smaller, or even of a higher order in some cases, than the one obtained when using inf-sup 
stable $\mathbb{P}_k^{}\times\mathbb{P}_{k-1}^{}$ (although no formal proof of either of these facts
has been given).  This increase in polynomial order requires the introduction of stabilising terms, since the finite element pairs used do not stability the inf-sup condition. With this motivation, we apply the stabilisation approach to the hybrid discontinuous Galerkin discretisation for the Stokes problem with non-standard boundary conditions.}

\section{Introduction}
\label{sec:introduction}

The interest of this paper is to discretise the Stokes problem with non-standard boundary conditions. 
In~\cite{barrenechea2018hybrid}, a hybrid discontinuous Galerkin (hdG) method was proposed and
analysed for this problem. The finite element method used was the combination of BDM elements of
order $k$ for the velocity, and discontinuous elements of order $k-1$ for the pressure. In this paper
we increase the order of the pressure space to $k$, while keeping the order for the velocity space
fixed as $k$. Since this pair does not satisfy the inf-sup condition, a stabilisation term needs to be added.

The stabilisation term referred to above can be built using a diversity of approaches, but, roughly speaking,
the stabilisation can be residual or non-residual. In~\cite{MR804083} 
the authors added a mesh-dependent term penalising the gradient of the pressure to the formulation. Later, in~\cite{MR868143} this method was restricted and reinterpreted as a Petrov-Galerkin scheme leading to the first consistent stabilised method, and further developments were presented in the works~\cite{MR946377} and~\cite{MR914609}. For a review of different residual stabilised finite element methods for the
Stokes problem, see the review paper~\cite{MR2087327}.

Now, due to their nature, residual methods include unphysical couplings to the formulation, and modify all the entries of the stiffness matrix. Hence, non-residual methods where only a positive semi-definite term penalising
the pressure is added have also being proposed. Examples of this type of methods are the pressure gradient projection~\cite{MR1800154} and local pressure gradient stabilisation~\cite{MR1890352}. 
The methods just mentioned typically use two nested meshes in order to build the method. Thus, to avoid this complication, the local pressure gradient stabilisation has been also presented on the same mesh in~\cite{MR2429873}.
Additionally, methods that use fluctuations of the pressure gradient are not effective when the finite element space
for pressure is the piecewise constant space. The usual way to overcome this is to add pressure jumps
to the formulation, as it has been done, e.g., in~\cite{MR1044204}. These have been shown to be very effective, 
but they do somehow temper with the data structure of the code. To avoid this, the authors in~\cite{MR2079895} present an approach that is based on polynomial-pressure-projection. This method works for low order of polynomials as was shown in~\cite{MR2217373}, and preserves symmetry of the original equation.

In the light of the discussion of the previous paragraphs, in this work we propose a stabilised hdG method
for the Stokes problem with non-standard boundary conditions. The method is reminiscent of the 
Dorhmann-Bochev method (from~\cite{MR2079895}), but uses the same velocity space used in the hdG
method from~\cite{barrenechea2018hybrid}.

\subsection{Notations and model problem}
\label{sec:notation_stabilisation}

Let $\Omega$ be an open polygonal domain in $\mathbb{R}^2$ with Lipschitz boundary $\Gamma := \partial \Omega$. We use boldface font for tensor or vector variables e.g. $\boldsymbol{u}$ is a velocity vector field. The scalar variables will be italic e.g. $p$ denotes pressure scalar value. We define the stress tensor $\boldsymbol{\sigma} := \nu \grad \boldsymbol{u} - p \boldsymbol{I}$ (where $\nu>0$ is the fluid viscosity and $\boldsymbol{I}$ is the
identity matrix) and the flux as $\boldsymbol{\sigma_n} := \boldsymbol{\sigma} \ \boldsymbol{n}$. 
In addition, we denote normal and tangential components as follows ${u_n} := \boldsymbol{u} \cdot \boldsymbol{n}$, $u_t := \boldsymbol{u}  \cdot \boldsymbol{t}$, $\sigma_{nn} := \boldsymbol{\sigma_n} \cdot \boldsymbol{n}$, 
where $\boldsymbol{n}$ is the outward unit normal vector to the
boundary $\Gamma$ and $\boldsymbol{t}$ is a vector tangential to
$\Gamma$ such that $\boldsymbol{n} \cdot \boldsymbol{t} =
0$.

\noindent For $D \subset \Omega$, we use the standard $L^2(D)$ space with the following norm
\begin{eqnarray*}
\|f\|_D^2 := \int_D f^2 \dx & \mbox{for all } f \in L^2(D).
\end{eqnarray*}
Let us define, for $m\in\mathbb{N}$, the following Sobolev spaces
\begin{align*}
H^m(D) & :=  \left\{v \in L^2(D): \ \forall \ |\boldsymbol{\alpha}| \leq m \ \partial^{\boldsymbol{\alpha}} {v} \in L^2(D)\right\} \,, \\
H\left(div, D\right) & := \left\{\boldsymbol{v} \in [L^2(D)]^2: \ \nabla \cdot \boldsymbol{v} \in L^2(D)\right\},
\end{align*} 
where, for $\boldsymbol{\alpha} = (\alpha_1, \alpha_2) \in \mathbb{N}^2$,
$|\boldsymbol{\alpha}| = \alpha_1+\alpha_2$, and $\partial^{\boldsymbol{\alpha}} = \frac{\partial^{|\boldsymbol{\alpha}|}}{\partial x_1^{\alpha_1} \partial x_2^{\alpha_2}}$.
In addition, we will use the standard semi-norm and norm for the Sobolev space $H^m(D)$  
\begin{equation*}
|f|_{H^m(D)}^2 := \sum_{|\boldsymbol{\alpha}| = m} \|\partial^{\boldsymbol{\alpha}} f\|_D^2\quad, \quad \|f\|_{H^m(D)}^2 := \sum_{k = 0}^m |f|_{H^k(D)}^2 \mbox{for all } f \in H^m(D).
\end{equation*}
%%%%%%%%%%%%%%%%%%%%%%%%%%%%%%%%%%%%%%%%%%%%%%%%%%%%%%%%%%%%%%%%%%%%%%%%%%%%%%%%%%%%%%%%%%%%%%%%%%%%%%%%%%%%
%%%%% Problem
In this work, we consider the two dimensional Stokes problem with tangential-velocity and normal-flux (TVNF) boundary conditions
\begin{equation}
\label{eq:stokes_TVNF}
\left\{
\begin{array}{rclclr}
 -\nu \Delta \boldsymbol{u} & \ + & \grad p & = & \boldsymbol{f} & \mbox{in } \Omega, \\
 & & \nabla \cdot \boldsymbol{u} & = & 0 & \mbox{in } \Omega, \\
 & & {\sigma_{nn}} & = & {g} & \mbox{ on } \Gamma, \\
 & & u_t & = & 0 & \mbox{ on } \Gamma,
 \end{array}
\right.
\end{equation}
where $\boldsymbol{u}: \bar{\Omega} \rightarrow \mathbb{R}^2$ is the unknown
velocity field, $p:\bar{\Omega} \rightarrow \mathbb{R}$ the pressure, $\nu >
0$ the viscosity, which is considered to be constant, and $\boldsymbol{f} \in [L^2(\Omega)]^2$, ${g} \in L^2(\Gamma)$ are given functions.
The restriction to homogeneous Dirichlet conditions on $u_t$ is made only to simplify the presentation.

%%%%%%%%%%%%%%%%%%%%%%%%%%%%%%%%%%%%%%%%%%%%%%%%%%
%%%%%% Triangulation
Let $\left\{\mathcal{T}_h\right\}_{h > 0}$ be a regular family of triangulations of $\bar{\Omega}$ made of triangles. For each triangulation $\mathcal{T}_h$, $\mathcal{E}_h$ denotes the set of its edges. In addition, for each of element $K \in \mathcal{T}_h$, $h_K := \mbox{diam}(K)$, and we denote $h := \max_{K \in \mathcal{T}_h} h_K$. 
We define following Sobolev spaces on the triangulation $\mathcal{T}_h$ and the set of all edges in $\mathcal{E}_h$
\begin{align*}
L^2(\mathcal{E}_h) & :=  \left\{v: \ v|_E \in L^2(E) \ \forall \ E \in \mathcal{E}_h \right\}, \\
H^m(\mathcal{T}_h) & :=  \left\{v \in L^2(\Omega): \ {v}|_K \in H^m(K) \ \forall \ K \in \mathcal{T}_h \right\} \mbox{ for } m \in \mathbb{N},
\end{align*}
with the corresponding broken norms.

Now we will introduce the finite element spaces that discretise the above spaces. 
Let $k \geq 1$.
%%%%%%%%%%%%%%%%%%%%%%%%%%%%%%%%%%%%%%%%%%%%%%%%%%
%%%% Discrete spaces
We start by introducing the velocity and pressure spaces.
To discretise the velocity $\boldsymbol{u}$ we use the Brezzi-Douglas-Marini space (see~\cite[Section~2.3.1]{MR3097958}) of order $k\ge 1$ defined by
\begin{align*}
 \boldsymbol{BDM_h^k} & := \left\{\boldsymbol{v_h} \in H\left(div, \Omega\right): \ \boldsymbol{v_h}|_K \in \left[\mathbb{P}_k\left(K\right)\right]^2 \ \forall \ {K \in \mathcal{T}_h}\right\}. 
\end{align*}
Associated to this space, we introduce the BDM projection
$\Pi^k: [H^1(\Omega)]^2 \rightarrow \boldsymbol{BDM_h^k}$  defined in~\cite[Section~2.5]{MR3097958}.
The pressure is discretised using the following space 
\begin{align*}
 Q_h^{k} & := \left\{q_h \in L^2\left(\Omega\right): \ q_h|_K \in \mathbb{P}_{k}\left(K\right) \ \forall \ {K \in \mathcal{T}_h}\right\} . 
\end{align*} 
Associated to this space we define the local $L^2(K)$-projection $\Psi_K^{k}: L^2(K) \rightarrow \mathbb{P}_{k}\left(K\right)$ for each $K \in \mathcal{T}_h$ defined as follows. For every $w \in L^2\left(K\right)$, $\Psi^{k}_K(w)$ is the unique element of $\mathbb{P}_{k}\left(K\right)$ satisfying $\int_{K} \Psi^{k}_K(w) v_h dx = \int_{K} {w v_h} dx \quad \forall \ v_h \in \mathbb{P}_{k}\left(K\right),$
and we define the continuous projection $\Psi^k|_K = \Psi^k_K$ for all $K \in \mathcal{T}_h$.

The last ingredient needed in the method described below is a finite element space associated to
a family of Lagrange multipliers associated to the edges of the triangulation. These multipliers will be denoted
by $\tilde{u}$ and are meant to approximate the tangential trace of the velocity $\boldsymbol{u}$ on the
edges of the triangulation. For this, and in order to 
propose a discretisation with fewer degrees of freedom, we discretise the Lagrange multiplier $\tilde{u}$ using the space
\begin{align*}
 M_{h,0}^{k-1} &:= \left\{\tilde{v}_h \in L^2\left(\mathcal{E}_h\right): \ \tilde{v}_h|_E \in \mathbb{P}_{k-1}\left(E \right) \ \forall \ {E \in \mathcal{E}_h}, \ \tilde{v}_h = 0 \mbox{ on } \Gamma\right\}. 
\end{align*} 
 Furthermore, we introduce for all $E \in \mathcal{E}_h$ the $L^2(E)$-projection $\Phi^{k-1}_E: L^2\left(E\right) \rightarrow \mathbb{P}_{k-1}\left(E\right)$ defined as follows. For every $\tilde{w} \in L^2\left(E\right)$, $\Phi^{k-1}_E(\tilde{w})$ is the unique element of $\mathbb{P}_{k-1}\left(E\right)$ satisfying $\int_{E} \Phi^{k-1}_E(\tilde{w}) {\tilde{v}_h} \ds = \int_{E} \tilde{w} {\tilde{v}_h} \ds \quad \forall \ {\tilde{v}_h} \in \mathbb{P}_{k-1}\left(E\right),$
and we denote $\Phi^{k-1}: L^2\left(\mathcal{E}_h\right) \rightarrow M_{h}^{k-1}$ defined as $\Phi^{k-1}|_E := \Phi^{k-1}_E$ for all $E \in \mathcal{E}_h$.

%%%%%%%%%%%%%%%%%%%%%%%%%%%%%%%%%%%%%%%%%%%%%%%%%%%%%%%%%%%%%%%%%%%%%%%%%%%%%%%%%%%%%%%%%%%%%%%%%%%%%%%%%%%%
% Formulation of higher order stabilisation Stokes problem
%%%%%%%%%%%%%%%%%%%%%%%%%%%%%%%%%%%%%%%%%%%%%%%%%%%%%%%%%%%%%%%%%%%%%%%%%%%%%%%%%%%%%%%%%%%%%%%%%%%%%%%%%%%%

\section{The stabilised method}
\label{sec:TVNF_stabilisation}

Our approach is to write the discrete problem with the same degree of polynomials for velocity and pressure spaces. In other words, denoting $\boldsymbol{V_h} := \boldsymbol{BDM_h^k} \times M_{h,0}^{k-1}$,
 we want to use the space $\boldsymbol{V_h} \times Q^k_h$, instead of $\boldsymbol{V_h} \times Q_h^{k-1}$ as it was done in~\cite{barrenechea2018hybrid}. 
To do this, we need the proper stabilisation term, because this choice of spaces does not guarantee inf-sup stability.

The first ingredient in the definition of the stabilised method for \eqref{eq:stokes_TVNF} we use the same bilinear forms as in~\cite{barrenechea2018hybrid}, this is
\begin{align*}
  & a \left(\left(\boldsymbol{w_h}, {\tilde{w}_h}\right), \left(\boldsymbol{v_h}, {\tilde{v}_h}\right)\right) := \sum_{K \in \mathcal{T}_h} \left(\int_K \nu \grad \boldsymbol{w_h} : \grad \boldsymbol{v_h} \dx \right. \\
&\quad \ - \int_{\partial K} \nu \left(\boldsymbol{\partial_n {w_h}}\right)_t \big(\left(\boldsymbol{v_h}\right)_t - \tilde{v}_h \big) \ds + \varepsilon \int_{\partial K} \nu \big(\left(\boldsymbol{w_h}\right)_t - \tilde{w}_h \big) \left(\boldsymbol{\partial_n {v_h}}\right)_t \ds \\ 
 & \quad  \left. + \nu \frac{\tau}{h_K} \int_{\partial K} \Phi^{k-1}\big(\left(\boldsymbol{w_h}\right)_t - \tilde{w}_h \big) \Phi^{k-1}\big(\left(\boldsymbol{v_h}\right)_t - \tilde{v}_h \big) \ds \right) \\ 
 & b\left(\left(\boldsymbol{v_h}, {\tilde{v}_h}\right), q_h\right)  := - \sum_{K \in \mathcal{T}_h} \int_K q_h \nabla \cdot \boldsymbol{v_h} \dx ,
\end{align*}
where $\varepsilon \in \{-1,1\}$ and $\tau > 0$ is a stabilisation parameter. In addition, to compensate
for the non-inf-sup stability of the finite element spaces we have chosen, we introduce the bilinear form
\begin{equation*}
s\left(p_h, q_h\right) := \frac{1}{{\nu}} \int_{\Omega} \left(p_h - \Psi^{k-1} p_h\right)\left(q_h - \Psi^{k-1} q_h\right) \dx .
\end{equation*}
%%%%%%%%%%%%%%%%%%%%%%%%%%%%%%%%%%%%%%%%%%%%%%%%%%
%% One line formulation of discrete problem
{With these ingredients we can now present the finite element method analysed in this work:}
\textit{Find $\left(\boldsymbol{u_h}, \tilde{u}_h, p_h\right) \in \boldsymbol{V_h} \times Q^k_h$ such that for all $\left(\boldsymbol{v_h}, \tilde{v}_h, q_h\right) \in \boldsymbol{V_h} \times Q^k_h$}
\begin{equation}
 \label{eq:TVNF_stabilisation_weak_stokes_one_line}
A\left(\left(\boldsymbol{u_h}, \tilde{u}_h,p_h\right),\left(\boldsymbol{v_h}, \tilde{v}_h,q_h\right)\right) = \displaystyle\int_{\Omega}\boldsymbol{f}\boldsymbol{v_h} \dx + \int_{\Gamma} {g} {\left(\boldsymbol{v_h}\right)_n} \ds ,
\end{equation}
where
\begin{align*}
A\left(\left(\boldsymbol{u_h}, \tilde{u}_h,p_h\right),\left(\boldsymbol{v_h}, \tilde{v}_h,q_h\right)\right) := & a \left(\left(\boldsymbol{u_h}, \tilde{u}_h\right),\left(\boldsymbol{v_h}, \tilde{v}_h\right)\right)+ b\left(\left(\boldsymbol{v_h}, \tilde{v}_h\right),p_h\right) \\ 
& + b\left(\left(\boldsymbol{u_h}, \tilde{u}_h\right),q_h\right) - s\left(p_h, q_h\right).
\end{align*}

%%%%%%%%%%%%%%%%%%%%%%%%%%%%%%%%%%%%%%%%%%%%%%%%%%%%%%%%%%%%%%%%%%%%%%%%%%%%%%%%%%%%%%%%%%%%%%%%%%%%%%%%%%%%
% Stability for stabilisation Stokes problem
%%%%%%%%%%%%%%%%%%%%%%%%%%%%%%%%%%%%%%%%%%%%%%%%%%%%%%%%%%%%%%%%%%%%%%%%%%%%%%%%%%%%%%%%%%%%%%%%%%%%%%%%%%%%

\subsection{Well-posedness of the discrete problem}
\label{sec:TVNF_stabilisation_stability}
%%%%%%%%%%%%%%%%%%%%%%%%%%%%%%%%%%%%%%%%%%%%%%%%%%
%% Definition of norm
Let us consider the following norm on $\boldsymbol{V_h}$ (see~\cite[Lemma 3.2]{barrenechea2018hybrid} for a proof
that this is actually a norm in $\boldsymbol{V_h}$)
\begin{align}
\label{eq:TVNF_norm}
\nonumber
|||\left(\boldsymbol{w_h}, {\tilde{w}_h}\right)|||^2 := &  \nu \sum_{K \in \mathcal{T}_h} \left(\left|\boldsymbol{w_h}\right|_{H^1(K)}^2 + h_K \left\| \boldsymbol{\partial_n w_h}\right\|_{\partial K}^2  + \frac{\tau}{h_K} \left\|\Phi^{k-1}\big(\left(\boldsymbol{w_h}\right)_t - {\tilde{w}_h}\big)\right\|_{\partial K}^2\right). 
\end{align}

{The first step towards proving the stability of Method~\eqref{eq:TVNF_stabilisation_weak_stokes_one_line} is the following weak inf-sup condition for $b$.}
\begin{lemma}
 \label{l:stabilisation_weak_inf_sup}
There exist constants $C_1, C_2 > 0$, independent of $h_K$ and $\nu$, such that
\begin{equation}
\label{eq:stabilisation_weak_inf_sup}
 \sup_{\left(\boldsymbol{v_h}, \tilde{v}_h\right) \in \boldsymbol{V_h}} \frac{b\left(\left(\boldsymbol{v_h}, \tilde{v}_h\right), q_h\right)}{|||\left(\boldsymbol{v_h}, \tilde{v}_h\right)|||} \geq C_1 \left\|q_h\right\|_{\Omega} - C_2 \left\|q_h - \Psi^{k-1} q_h\right\|_{\Omega} \quad \forall q_h \in Q^k_h.
\end{equation}
\end{lemma}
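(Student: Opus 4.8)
The plan is to reduce the weak inf-sup condition for the enriched pressure space $Q^k_h$ to the genuine inf-sup condition already available for the inf-sup stable pair $(\boldsymbol{V_h}, Q_h^{k-1})$ analysed in~\cite{barrenechea2018hybrid}; the whole point is that the additional pressure modes carried by $Q^k_h$ but not by $Q_h^{k-1}$ are invisible to the form $b$. Given $q_h \in Q^k_h$, I would split it as $q_h = \Psi^{k-1}q_h + \left(q_h - \Psi^{k-1}q_h\right)$, with $\Psi^{k-1}q_h \in Q_h^{k-1}$, and treat the two pieces separately.

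The key observation is that $b\left(\left(\boldsymbol{v_h}, \tilde{v}_h\right), q_h - \Psi^{k-1}q_h\right) = 0$ for every $\left(\boldsymbol{v_h}, \tilde{v}_h\right) \in \boldsymbol{V_h}$. Indeed, $b$ does not depend on $\tilde{v}_h$, and since $\boldsymbol{v_h}|_K \in \left[\mathbb{P}_k(K)\right]^2$ we have $\nabla \cdot \boldsymbol{v_h}|_K \in \mathbb{P}_{k-1}(K)$ on each element, whereas $q_h - \Psi^{k-1}q_h$ is, by the very definition of the local $L^2(K)$-projection $\Psi^{k-1}_K$, orthogonal to $\mathbb{P}_{k-1}(K)$ on each $K$. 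Hence every element contribution $\int_K \left(q_h - \Psi^{k-1}q_h\right)\,\nabla\cdot\boldsymbol{v_h}\dx$ vanishes, so that $b\left(\left(\boldsymbol{v_h}, \tilde{v}_h\right), q_h\right) = b\left(\left(\boldsymbol{v_h}, \tilde{v}_h\right), \Psi^{k-1}q_h\right)$ for all test functions. In particular, the supremum on the left-hand side of~\eqref{eq:stabilisation_weak_inf_sup} is unchanged when $q_h$ is replaced by $\Psi^{k-1}q_h$.

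It then remains to invoke the strong inf-sup condition for the pair $(\boldsymbol{V_h}, Q_h^{k-1})$ established in~\cite{barrenechea2018hybrid}, namely that there is a constant $\beta > 0$ with
\begin{equation*}
\sup_{\left(\boldsymbol{v_h},\tilde{v}_h\right)\in\boldsymbol{V_h}} \frac{b\left(\left(\boldsymbol{v_h},\tilde{v}_h\right), \Psi^{k-1}q_h\right)}{|||\left(\boldsymbol{v_h},\tilde{v}_h\right)|||} \geq \beta \left\|\Psi^{k-1}q_h\right\|_\Omega .
\end{equation*}
Combining this with the orthogonality identity of the previous paragraph and the triangle inequality $\left\|\Psi^{k-1}q_h\right\|_\Omega \geq \left\|q_h\right\|_\Omega - \left\|q_h - \Psi^{k-1}q_h\right\|_\Omega$ yields~\eqref{eq:stabilisation_weak_inf_sup} at once, with the clean outcome $C_1 = C_2 = \beta$.

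I do not expect the estimate itself to be the main obstacle: once the local orthogonality is noticed, the argument is essentially two lines. The substantive input is imported through the reference inf-sup, and the point requiring care is to confirm that the constant $\beta$ genuinely inherits the claimed independence of $h_K$ and $\nu$. Since the triple norm carries a factor $\nu$, I would track this scaling explicitly when transcribing the reference statement: if there the test function is built as $\boldsymbol{v_h} = \Pi^k\boldsymbol{v}$ with $\nabla\cdot\boldsymbol{v_h} = -\Psi^{k-1}q_h$, this amounts to checking a bound of the form $|||\left(\boldsymbol{v_h},\tilde{v}_h\right)||| \leq C\sqrt{\nu}\,\left\|\Psi^{k-1}q_h\right\|_\Omega$ from the $H^1$-stability of the BDM projection and a discrete trace inequality for the edge terms of the norm, together with the fact that, under the TVNF boundary conditions, the pressure is controlled in the full $L^2(\Omega)$-norm with no zero-mean restriction.
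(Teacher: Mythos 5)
Your argument is correct, but it reaches the estimate by a genuinely different route than the paper. The paper does not exploit the exact orthogonality you observe; instead it re-derives the inf-sup from scratch at the continuous level: it extends $q_h$ by zero to a convex neighbourhood $\tilde{\Omega}$, solves $-\Delta\boldsymbol{\phi}=\hat{q}_h$, sets $\boldsymbol{w}=\grad\boldsymbol{\phi}$ so that $b\left(\left(\boldsymbol{w},\tilde{w}\right),q_h\right)=\|q_h\|_{\Omega}^2$ with $\|\boldsymbol{w}\|_{H^1(\Omega)}\le c_1\|q_h\|_{\Omega}$, and then applies the Fortin operator of \cite{barrenechea2018hybrid}. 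Since the Fortin property \eqref{eq:fortin_operator_stokes_first} only holds against $Q_h^{k-1}$, the fluctuation $q_h-\Psi^{k-1}q_h$ survives as the error term $b\left(\left(\boldsymbol{w}-\boldsymbol{w_h},\tilde{w}-\tilde{w}_h\right),q_h-\Psi^{k-1}q_h\right)$, which is then bounded via continuity of $b$ and the BDM approximation estimate; that is where the paper's $C_2$ originates. You instead kill the fluctuation exactly at the discrete level --- legitimately, since $\nabla\cdot\boldsymbol{v_h}|_K\in\mathbb{P}_{k-1}(K)$ while $q_h-\Psi^{k-1}_Kq_h\perp\mathbb{P}_{k-1}(K)$ --- and then cite the discrete inf-sup for the stable pair $\left(\boldsymbol{V_h},Q_h^{k-1}\right)$ wholesale. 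This is cleaner, gives $C_1=C_2$, and dispenses with the BDM approximation estimate, at the price of importing the full discrete inf-sup rather than only the Fortin operator (both are available in the reference; the paper's proof is in effect a re-derivation of that inf-sup with the fluctuation carried along). Your caution about the $\nu$-scaling is well placed: the norm $|||\cdot|||$ carries a factor $\nu$, and the paper's own proof ends with $C_1=\frac{1}{C\sqrt{\nu}c_1}$ and $C_2=\frac{c_2c_3}{C\sqrt{\nu}}$, which strictly speaking is at odds with the claimed $\nu$-independence in the statement --- your explicit bookkeeping would surface this rather than hide it.
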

\begin{proof}
We consider an arbitrary $q_h \in Q^k_h$. Let $\tilde{\Omega}$ be a convex, open, Lipschitz set such that $\Omega \subset \tilde{\Omega}$, and let us consider following extension 
\begin{equation*}
  \hat{q}_h := \left\{
  \begin{array}{lcr}
    q_h & \mbox{in} & \Omega \\
    0 & \mbox{in} & \tilde{\Omega} \setminus \Omega
  \end{array}
  \right. .
\end{equation*}
{Let now $\boldsymbol{\phi}$ be the unique weak solution of the problem}
\begin{equation*}
  \left\{
  \begin{array}{rcll}
      -\Delta \boldsymbol{\phi} & = & \hat{q}_h  & \mbox{on } \tilde{\Omega} \\
      \boldsymbol{\phi} & = & 0 & \mbox{on } \partial \Omega
  \end{array}
  \right. .
\end{equation*}
Since $\tilde{\Omega}$ is convex, then  $\boldsymbol{\phi} \in H^2(\tilde{\Omega})$. Then $\boldsymbol{w} := \grad \boldsymbol{\phi}|_{\Omega}$ belongs to $[H^1(\Omega)]^2$, and for $\tilde{w} := w_t$,
\begin{equation}
\label{eq:weak_inf_sup_equality}
	b\left(\left(\boldsymbol{w}, \tilde{w}\right), q_h\right) = \|q_h\|_{\Omega}^2 \quad \forall q_h \in Q^{k}_h.
\end{equation}
In addition, applying standard  regularity results, see \cite[Section~1.2]{MR3097958}, we get
\begin{equation}
\label{eq:weak_inf_sup_inequality}
	\|\boldsymbol{w}\|_{H^1(\Omega)} \leq \|\grad \boldsymbol{\phi}\|_{H^1(\tilde{\Omega})} \leq c_1 \|q_h\|_{\Omega}.
\end{equation}
In~\cite[Lemma 3.5]{barrenechea2018hybrid} it is shown that there exists a Fortin operator $\boldsymbol{\Pi}: \left[H^1\left(\Omega\right) \right]^2 \rightarrow \boldsymbol{V_h}$ satisfying the following condition: for all $\boldsymbol{v} \in [H^1(\Omega)]^2$ the following holds
\begin{align}
\label{eq:fortin_operator_stokes_first}
 b\left(\left(\boldsymbol{v}, \tilde{v}\right), q_h \right) & = b\left(\boldsymbol{\Pi}\left(\boldsymbol{v}\right), q_h \right) \quad \forall \ {q_h \in Q_h^{k-1}}, \\
\label{eq:fortin_operator_stokes_second}
 |||\boldsymbol{\Pi}\left(\boldsymbol{v}\right)||| & \leq C  \sqrt{\nu}\|\boldsymbol{v}\|_{H^1\left(\Omega\right)}.
\end{align}
Let $\left(\boldsymbol{w_h}, {\tilde{w}_h}\right) := \boldsymbol{\Pi} \left(\boldsymbol{w}\right)$,  
then 
thanks to~\eqref{eq:fortin_operator_stokes_first}, \eqref{eq:weak_inf_sup_equality} and the continuity of $b$ (see~\cite[Lemma 3.3]{barrenechea2018hybrid})
\begin{align*}
	b\left(\left(\boldsymbol{w_h}, \tilde{w}_h\right), q_h\right) & = b\left(\left(\boldsymbol{w}, \tilde{w}\right), q_h\right) - b\left(\left(\boldsymbol{w} - \boldsymbol{w}_h, \tilde{w} - \tilde{w}_h\right), q_h- \Psi^{k-1} q_h\right) \\
& \geq \|q_h\|_{\Omega}^2 - c_2 \sqrt{\sum_{K \in \mathcal{T}_h} \left|\boldsymbol{w_h} - \boldsymbol{w}\right|^2_{H^1(K)}} \left\|q_h - \Psi^{k-1} q_h\right\|_{\Omega}.
\end{align*}
Using the approximation properties of the BDM interpolation operator (see~\cite[Preposition 2.5.1]{MR3097958}) and~\eqref{eq:weak_inf_sup_inequality}
\begin{align*}
	b\left(\left(\boldsymbol{w_h}, \tilde{w}_h\right), q_h\right) & \geq 
\left(\frac{1}{c_1}\|q_h\|_{\Omega} - c_2 c_3 \left\|q_h - \Psi^{k-1} q_h\right\|_{\Omega} \right) \left|\boldsymbol{w}\right|_{H^1(\Omega)} \\
&\ge \left(C_1\|q_h\|_{\Omega} - C_2 \left\|q_h - \Psi^{k-1} q_h\right\|_{\Omega} \right) |||\left(\boldsymbol{w_h}, {\tilde{w}_h}\right)|||\,,
\end{align*}
{where, in  the last estimate we have used the stability of the Fortin operator $\boldsymbol{\Pi}$ 
in the $|||\cdot|||$ norm~\eqref{eq:fortin_operator_stokes_second}. 
This proves the result with
$C_1 = \frac{1}{C \sqrt{\nu} c_1}$ and $C_2 = \frac{c_2 c_3}{C \sqrt{\nu}}$.}
\end{proof}
Before showing an inf-sup condition, we prove the continuity of bilinear form $A$.
%%%%%%%%%%%%%%%%%%%%%%%%%%%%%%%%%%%%%%%%%%%%%%%%%%
%% Boundedness 
\begin{lemma}
 \label{l:stabilisation_continuity}
There exists a constant $C > 0$ such that, for all $\left(\boldsymbol{w_h}, \tilde{w}_h\right), \left(\boldsymbol{v_h}, \tilde{v}_h\right) \in \boldsymbol{V_h}$ and $r_h, q_h \in Q^k_h$, we have
\begin{equation}
\label{eq:stabilisation_continuity}
 \left|A\left(\left(\boldsymbol{w_h}, \tilde{w}_h,r_h\right), \left(\boldsymbol{v_h}, \tilde{v}_h,q_h\right)\right)\right| \leq C |||\left(\boldsymbol{w_h}, \tilde{w}_h,r_h\right)|||_h |||\left(\boldsymbol{v_h}, \tilde{v}_h,q_h\right)|||_h.
\end{equation}
\end{lemma}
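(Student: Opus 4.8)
The plan is to exploit the additive structure of $A$. By definition,
\[
A\left(\left(\boldsymbol{w_h}, \tilde{w}_h,r_h\right), \left(\boldsymbol{v_h}, \tilde{v}_h,q_h\right)\right) = a\left(\left(\boldsymbol{w_h}, \tilde{w}_h\right),\left(\boldsymbol{v_h}, \tilde{v}_h\right)\right) + b\left(\left(\boldsymbol{v_h}, \tilde{v}_h\right),r_h\right) + b\left(\left(\boldsymbol{w_h}, \tilde{w}_h\right),q_h\right) - s\left(r_h, q_h\right),
\]
so by the triangle inequality it suffices to bound each of the four pieces separately by a product of the two full norms $|||\cdot|||_h$, where the latter is understood as the norm on $\boldsymbol{V_h}\times Q_h^k$ obtained by augmenting $|||\cdot|||$ with the pressure contributions $\frac{1}{\nu}\|q_h\|_\Omega^2$ and $s(q_h,q_h)$. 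The whole argument then reduces to applying Cauchy--Schwarz term by term and tracking the weights of $\nu$ and $h_K$.

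For the term $a$ I would simply invoke the continuity of $a$ with respect to $|||\cdot|||$ already established in~\cite[Lemma 3.3]{barrenechea2018hybrid}: since neither the bilinear form $a$ nor the spaces $\boldsymbol{BDM_h^k}$ and $M_{h,0}^{k-1}$ have changed relative to that reference, its continuity estimate transfers verbatim. This is the reassuring point, because the genuine analytic content---controlling the boundary terms involving $(\boldsymbol{\partial_n w_h})_t$ and the non-projected jumps $(\boldsymbol{w_h})_t - \tilde{w}_h$ by the $h_K$-weighted trace contributions present in $|||\cdot|||$---is exactly the part of the argument one would otherwise have to redo from scratch.

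The two copies of $b$ are handled identically and directly. Writing $b\left(\left(\boldsymbol{v_h}, \tilde{v}_h\right),r_h\right) = -\sum_{K \in \mathcal{T}_h} \int_K r_h \, \nabla \cdot \boldsymbol{v_h} \dx$, the Cauchy--Schwarz inequality together with the elementary bound $\|\nabla \cdot \boldsymbol{v_h}\|_K \le \sqrt{2}\,|\boldsymbol{v_h}|_{H^1(K)}$ yields $|b\left(\left(\boldsymbol{v_h}, \tilde{v}_h\right),r_h\right)| \le \sqrt{2}\,\|r_h\|_\Omega \big(\sum_{K \in \mathcal{T}_h} |\boldsymbol{v_h}|_{H^1(K)}^2\big)^{1/2}$. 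The key observation is that the powers of $\nu$ cancel: since $|||\cdot|||$ weights the $H^1$-seminorm by $\nu$ while the augmented norm weights $\|r_h\|_\Omega^2$ by $\nu^{-1}$, one obtains a bound of the form $|b| \le \sqrt{2}\,|||\left(\boldsymbol{v_h}, \tilde{v}_h\right)||| \cdot |||\left(\boldsymbol{w_h}, \tilde{w}_h, r_h\right)|||_h$ with a constant independent of $\nu$ and $h$. Crucially, this estimate never uses the polynomial degree of the pressure, so it holds for $Q_h^k$ exactly as it did for $Q_h^{k-1}$ in~\cite{barrenechea2018hybrid}.

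Finally, the stabilisation term $s$ is the easiest piece: from its definition and Cauchy--Schwarz, $|s(r_h,q_h)| \le \frac{1}{\nu}\|r_h - \Psi^{k-1}r_h\|_\Omega\,\|q_h - \Psi^{k-1}q_h\|_\Omega = s(r_h,r_h)^{1/2}\,s(q_h,q_h)^{1/2}$, which is bounded by $|||\left(\boldsymbol{w_h}, \tilde{w}_h, r_h\right)|||_h\,|||\left(\boldsymbol{v_h}, \tilde{v}_h, q_h\right)|||_h$ directly from the definition of the augmented norm. Collecting the four bounds and taking $C$ to be the largest of the resulting constants gives the claim. I do not expect a genuine obstacle here; the only point requiring care is the consistent bookkeeping of the $\nu$-weights, so that the final constant is indeed independent of both $\nu$ and $h_K$, which hinges precisely on pairing the $\nu$-weighted $H^1$ part of $|||\cdot|||$ with the $\nu^{-1}$-weighted pressure part of $|||\cdot|||_h$ as above.
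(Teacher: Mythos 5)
Your proposal is correct and follows essentially the same route as the paper, whose proof is a one-line appeal to the continuity of $a$ and $b$ from~\cite[Lemma 3.3]{barrenechea2018hybrid} together with the boundedness of the projection $\Psi^{k-1}$ (which gives $s(q_h,q_h)\le \tfrac{1}{\nu}\|q_h\|_{\Omega}^2$, so your ``augmented'' norm is equivalent to the paper's $|||\cdot|||_h$ and the $s$-term is controlled exactly as you describe). You have simply written out the term-by-term Cauchy--Schwarz bookkeeping that the paper leaves implicit.
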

%%%%%%%%%%%%%%%%%%%%%%%%%%%%%%%%%%%%%%%%%%%%%%%%%%
%% Boundedness proof
\begin{proof}
We use the continuity of the bilinear forms (see~\cite[Lemma 3.3]{barrenechea2018hybrid}) and  the fact that the projection is a bounded operator.
\end{proof}
The final  step towards stability is proving the inf-sup condition for bilinear form $A$.
%%%%%%%%%%%%%%%%%%%%%%%%%%%%%%%%%%%%%%%%%%%%%%%%%%
%% Inf-sup for low order
\begin{lemma}
 \label{l:stabilisation_inf_sup}
There exists $\beta > 0$ independent of $h_K$ such that for all $\left(\boldsymbol{w_h}, \tilde{w}_h,r_h\right) \in \boldsymbol{V_h} \times Q^k_h$ the following holds
	\begin{equation}
\label{eq:stabilisation_inf_sup}
	\sup_{ \left(\boldsymbol{v_h}, \tilde{v}_h,q_h\right) \in \boldsymbol{V_h} \times Q^k_h} \frac{A\left(\left(\boldsymbol{w_h}, \tilde{w}_h,r_h\right),\left(\boldsymbol{v_h}, \tilde{v}_h,q_h\right)\right)}{|||\left(\boldsymbol{v_h}, \tilde{v}_h,q_h\right)|||_h} \geq \beta |||\left(\boldsymbol{w_h}, \tilde{w}_h,r_h\right)|||_h.
	\end{equation}
As a consequence, Problem~\eqref{eq:TVNF_stabilisation_weak_stokes_one_line} is well-posed. 
\end{lemma}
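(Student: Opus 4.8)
The plan is to prove \eqref{eq:stabilisation_inf_sup} by the classical two-test-function argument for stabilised saddle-point problems, constructing, for each fixed $\left(\boldsymbol{w_h}, \tilde{w}_h, r_h\right) \in \boldsymbol{V_h} \times Q^k_h$, an explicit test triple against which $A$ is bounded below. The first, diagonal, choice is $\left(\boldsymbol{w_h}, \tilde{w}_h, -r_h\right)$. Inserting it into $A$, the two off-diagonal contributions $b\left(\left(\boldsymbol{w_h}, \tilde{w}_h\right), r_h\right)$ and $b\left(\left(\boldsymbol{w_h}, \tilde{w}_h\right), -r_h\right)$ cancel, leaving
\[
A\left(\left(\boldsymbol{w_h}, \tilde{w}_h, r_h\right),\left(\boldsymbol{w_h}, \tilde{w}_h, -r_h\right)\right) = a\left(\left(\boldsymbol{w_h}, \tilde{w}_h\right),\left(\boldsymbol{w_h}, \tilde{w}_h\right)\right) + s\left(r_h, r_h\right).
\]
I would then invoke the coercivity of $a$ on $\boldsymbol{V_h}$ with respect to $|||\cdot|||$ (established for the same bilinear form in~\cite{barrenechea2018hybrid}; for $\varepsilon=1$ it holds for every $\tau>0$ after a discrete trace inequality controls the $\nu h_K\|\boldsymbol{\partial_n w_h}\|_{\partial K}^2$ term, while for $\varepsilon=-1$ it requires $\tau$ large enough), together with the identity $s\left(r_h,r_h\right) = \tfrac{1}{\nu}\|r_h - \Psi^{k-1} r_h\|_{\Omega}^2$. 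This diagonal test therefore controls $|||\left(\boldsymbol{w_h}, \tilde{w}_h\right)|||^2$ and the pressure fluctuation $\|r_h - \Psi^{k-1} r_h\|_{\Omega}^2$, but \emph{not} the full pressure norm $\|r_h\|_{\Omega}$.

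To recover control of $\|r_h\|_{\Omega}$, the second ingredient is Lemma~\ref{l:stabilisation_weak_inf_sup}. For the fixed $r_h$ I would select $\left(\boldsymbol{z_h}, \tilde{z}_h\right) \in \boldsymbol{V_h}$ nearly realising the supremum in \eqref{eq:stabilisation_weak_inf_sup} (exactly, by finite dimensionality), and rescale it so that $|||\left(\boldsymbol{z_h}, \tilde{z}_h\right)||| = \|r_h\|_{\Omega}$. Linearity of $b$ in its first argument then gives
\[
b\left(\left(\boldsymbol{z_h}, \tilde{z}_h\right), r_h\right) \geq C_1 \|r_h\|_{\Omega}^2 - C_2 \|r_h - \Psi^{k-1} r_h\|_{\Omega}\,\|r_h\|_{\Omega}.
\]
Testing $A$ with $\left(\boldsymbol{z_h}, \tilde{z}_h, 0\right)$ picks up, besides this term, only $a\left(\left(\boldsymbol{w_h}, \tilde{w}_h\right),\left(\boldsymbol{z_h}, \tilde{z}_h\right)\right)$, which I bound from below by $-M\,|||\left(\boldsymbol{w_h}, \tilde{w}_h\right)|||\,\|r_h\|_{\Omega}$ using the continuity of $a$ from Lemma~\ref{l:stabilisation_continuity}.

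The two estimates are combined by testing with $\left(\boldsymbol{v_h}, \tilde{v}_h, q_h\right) = \left(\boldsymbol{w_h}, \tilde{w}_h, -r_h\right) + \delta\left(\boldsymbol{z_h}, \tilde{z}_h, 0\right)$ for a small $\delta>0$, yielding a lower bound
\[
\alpha\,|||\left(\boldsymbol{w_h}, \tilde{w}_h\right)|||^2 + \tfrac{1}{\nu}\|r_h - \Psi^{k-1} r_h\|_{\Omega}^2 + \delta\big(C_1\|r_h\|_{\Omega}^2 - C_2\|r_h - \Psi^{k-1} r_h\|_{\Omega}\|r_h\|_{\Omega} - M\,|||\left(\boldsymbol{w_h}, \tilde{w}_h\right)|||\,\|r_h\|_{\Omega}\big).
\]
I would absorb the two indefinite cross terms by Young's inequality, routing the $M$-term into $\alpha\,|||\cdot|||^2$ and the $C_2$-term into $\tfrac{1}{\nu}\|r_h - \Psi^{k-1} r_h\|_{\Omega}^2$; fixing $\delta$ small enough (depending on $\alpha$, $M$, $C_1$, $C_2$ and $\nu$, but not on $h_K$) then leaves a genuine lower bound $c\,|||\left(\boldsymbol{w_h}, \tilde{w}_h, r_h\right)|||_h^2$ in the full norm $|||\cdot|||_h$ appearing in Lemma~\ref{l:stabilisation_continuity}. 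Since the normalisation of $\left(\boldsymbol{z_h}, \tilde{z}_h\right)$ and a triangle inequality give $|||\left(\boldsymbol{v_h}, \tilde{v}_h, q_h\right)|||_h \leq C\,|||\left(\boldsymbol{w_h}, \tilde{w}_h, r_h\right)|||_h$, dividing through produces \eqref{eq:stabilisation_inf_sup} with $\beta = c/C$. Well-posedness of \eqref{eq:TVNF_stabilisation_weak_stokes_one_line} then follows at once: on the finite-dimensional space $\boldsymbol{V_h} \times Q^k_h$ the square system is injective by \eqref{eq:stabilisation_inf_sup}, hence invertible.

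The step I expect to be the main obstacle is the bookkeeping in this combination: one must check that the \emph{single} positive term $\tfrac{1}{\nu}\|r_h - \Psi^{k-1} r_h\|_{\Omega}^2$ supplied by $s$ is large enough to absorb the $C_2$ cross term at the chosen $\delta$, and simultaneously that the coercive velocity contribution survives the $M$ cross term, so that $\delta$ can be fixed independently of the mesh size. Keeping the weights in the two applications of Young's inequality mutually compatible, while tracking the powers of $\nu$ hidden in $C_1, C_2$ and in $s$ (harmless here, since $\beta$ is only required to be independent of $h_K$), is the only genuine difficulty; the coercivity and continuity of $a$ and the weak inf-sup for $b$ are all available as quoted results.
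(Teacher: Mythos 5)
Your proposal is correct and is essentially the argument the paper intends: the paper's own proof is only a two-line sketch (citing coercivity and continuity of $a$ and deferring details to the thesis~\cite{bosy2017efficient}), and your construction --- the diagonal test $\left(\boldsymbol{w_h},\tilde{w}_h,-r_h\right)$ combined with a $\delta$-scaled supremiser from Lemma~\ref{l:stabilisation_weak_inf_sup} and Young's inequality to absorb the cross terms --- is the standard realisation of exactly that plan, with the bookkeeping handled correctly. The only cosmetic remark is that normalising $|||\left(\boldsymbol{z_h},\tilde{z}_h\right)||| = \tfrac{1}{\sqrt{\nu}}\|r_h\|_{\Omega}$ rather than $\|r_h\|_{\Omega}$ would keep the $\nu$-scaling of $\beta$ cleaner, but as you note this is immaterial since only $h_K$-independence is claimed.
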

%%%%%%%%%%%%%%%%%%%%%%%%%%%%%%%%%%%%%%%%%%%%%%%%%%
%% Inf-sup proof
\begin{proof}
Let $\left(\boldsymbol{w_h}, \tilde{w}_h,r_h\right) \in \boldsymbol{V_h} \times Q^k_h$. The idea of the proof is to construct an appropriate $\left(\boldsymbol{v_h}, \tilde{v}_h,q_h\right)$ such that
\begin{equation*}
 A\left(\left(\boldsymbol{w_h}, \tilde{w}_h,r_h\right),\left(\boldsymbol{v_h}, \tilde{v}_h,q_h\right)\right) \geq c |||\left(\boldsymbol{w_h}, \tilde{w}_h,r_h\right)|||_h \ |||\left(\boldsymbol{v_h}, \tilde{v}_h,q_h\right)|||_h.
\end{equation*}
To achieve that we use coercivity of $a$ (see~\cite[Lemma 3.4]{barrenechea2018hybrid}), continuity of $a$ (see~\cite[Lemma 3.3]{barrenechea2018hybrid}) and Lemma~\ref{l:stabilisation_continuity}. For details see~\cite{bosy2017efficient}.
\end{proof}
%%%%%%%%%%%%%%%%%%%%%%%%%%%%%%%%%%%%%%%%%%%%%%%%%%
%% Remark about consistency

%%%%%%%%%%%%%%%%%%%%%%%%%%%%%%%%%%%%%%%%%%%%%%%%%%%%%%%%%%%%%%%%%%%%%%%%%%%%%%%%%%%%%%%%%%%%%%%%%%%%%%%%%%%%
% Error analysis of stabilisation Stokes problem
%%%%%%%%%%%%%%%%%%%%%%%%%%%%%%%%%%%%%%%%%%%%%%%%%%%%%%%%%%%%%%%%%%%%%%%%%%%%%%%%%%%%%%%%%%%%%%%%%%%%%%%%%%%%

\subsection{Error analysis}
\label{sec:TVNF_stabilisation_error}

In this section we present the error estimates for the method. 
The addition of the stabilising bilinear form $s(\cdot,\cdot)$ introduced a consistency error. However according to \cite{MR2217373}, this should not be viewed as a serious flaw, as this consistency error can be bounded in an optimal way. The following result is the first step towards that goal.
 \begin{lemma}
\label{l:stabilisation_weak_consistency}
 Let $\left(\boldsymbol{u},p\right) \in \left[H^1\left(\Omega\right) \cap H^2\left(\mathcal{T}_h\right)\right]^2 \times L^2\left(\Omega\right)$ be the solution of the problem \eqref{eq:stokes_TVNF} and $\tilde{u} = u_t$ on all edges of $\mathcal{E}_h$. If $\left(\boldsymbol{u_h}, \tilde{u}_h, p_h \right) \in \boldsymbol{V_h} \times Q^{k}_h$ solves \eqref{eq:TVNF_stabilisation_weak_stokes_one_line}, then for all $\left(\boldsymbol{v_h}, \tilde{v}_h, q_h\right) \in \boldsymbol{V_h} \times Q^{k}_h$ the following holds
\begin{equation}
\label{eq:stabilisation_consistency}
 A\left(\left(\boldsymbol{u} - \boldsymbol{u_h}, \tilde{u} - \tilde{u}_h,p - p_h\right),\left(\boldsymbol{v_h},\tilde{v}_h, q_h\right)\right) = s\left(p, q_h\right)\,. 
 \end{equation}
\end{lemma}

Next, we introduce the following norm
\begin{equation}
\label{eq:TVNF_norm_full}
	|||(\boldsymbol{u}, \tilde{u}, p)|||_h := |||(\boldsymbol{u}, \tilde{u})||| + \frac{1}{\sqrt{\nu}}\|p\|_{\Omega},
\end{equation}
and prove the following  variant of Cea's lemma~\cite[Lemma 2.28]{MR2050138} for this stabilised Stokes problem.
\begin{lemma}
\label{l:stabilisation_Cea}
Let $\left(\boldsymbol{u},p\right) \in \left[H^1\left(\Omega\right) \cap H^2\left(\mathcal{T}_h\right)\right]^2 \times L^2\left(\Omega\right)$ be a solution of~\eqref{eq:stokes_TVNF}, $\tilde{u}=u_t$ on all edges in $\mathcal{E}_h$,
 and $\left(\boldsymbol{u_h}, \tilde{u}_h,p_h\right) \in \boldsymbol{V_h} \times Q^k_h$ solves the discrete problem~\eqref{eq:TVNF_stabilisation_weak_stokes_one_line}. Then there exists $C > 0$, independent of $h$ and $\nu$, such that
\begin{align}
\label{eq:stabilisation_Cea} \nonumber
|||\left(\boldsymbol{u}-\boldsymbol{u_h}, \tilde{u}- \tilde{u}_h,p - p_h\right)|||_h \leq & C \inf_{\left(\boldsymbol{v_h}, \tilde{v}_h,q_h\right) \in \boldsymbol{V_h} \times Q^k_h} |||\left({\boldsymbol{u}}-\boldsymbol{v_h},\tilde{u}- \tilde{v}_h,p - q_h\right)|||_h \\
& +
\frac{C}{\sqrt{\nu}} \left\|p - \Psi^{k-1} p \right\|_{\Omega}.
\end{align} 
\end{lemma}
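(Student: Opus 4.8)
The plan is to run a Strang-type (first-lemma) argument, organised around the inf-sup condition of Lemma~\ref{l:stabilisation_inf_sup} and the perturbed Galerkin orthogonality of Lemma~\ref{l:stabilisation_weak_consistency}. First I would fix an arbitrary discrete triple $\left(\boldsymbol{v_h}, \tilde{v}_h, q_h\right) \in \boldsymbol{V_h} \times Q^k_h$ and split the total error as
\[
\left(\boldsymbol{u}-\boldsymbol{u_h}, \tilde{u}-\tilde{u}_h, p-p_h\right) = \boldsymbol{\eta} + \boldsymbol{e_h},
\]
where $\boldsymbol{\eta} := \left(\boldsymbol{u}-\boldsymbol{v_h}, \tilde{u}-\tilde{v}_h, p-q_h\right)$ is the approximation error and $\boldsymbol{e_h} := \left(\boldsymbol{v_h}-\boldsymbol{u_h}, \tilde{v}_h-\tilde{u}_h, q_h-p_h\right) \in \boldsymbol{V_h} \times Q^k_h$ is fully discrete. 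The triangle inequality in the $|||\cdot|||_h$-norm reduces the task to estimating $|||\boldsymbol{e_h}|||_h$, since the $|||\boldsymbol{\eta}|||_h$-contribution already has the form of the infimum appearing on the right-hand side of \eqref{eq:stabilisation_Cea}.

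Since $\boldsymbol{e_h}$ is discrete, I would next apply the inf-sup condition \eqref{eq:stabilisation_inf_sup} to it and bound the numerator for a generic test triple $\boldsymbol{\phi_h} = \left(\boldsymbol{v_h}', \tilde{v}_h', q_h'\right) \in \boldsymbol{V_h} \times Q^k_h$. Writing $\boldsymbol{e_h} = \left(\boldsymbol{u}-\boldsymbol{u_h}, \tilde{u}-\tilde{u}_h, p-p_h\right) - \boldsymbol{\eta}$ and using bilinearity gives
\[
A\left(\boldsymbol{e_h}, \boldsymbol{\phi_h}\right) = A\big(\left(\boldsymbol{u}-\boldsymbol{u_h}, \tilde{u}-\tilde{u}_h, p-p_h\right), \boldsymbol{\phi_h}\big) - A\left(\boldsymbol{\eta}, \boldsymbol{\phi_h}\right).
\]
The first term on the right is replaced by the consistency contribution $s\left(p, q_h'\right)$ thanks to Lemma~\ref{l:stabilisation_weak_consistency}, while the second is controlled through the continuity bound of Lemma~\ref{l:stabilisation_continuity}.

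It then remains to estimate the two resulting quantities. For the consistency term I would use the definition of $s$, the Cauchy--Schwarz inequality, and the fact that the $L^2$-orthogonal projection satisfies $\|q_h' - \Psi^{k-1} q_h'\|_{\Omega} \le \|q_h'\|_{\Omega}$, so that
\[
\left|s\left(p, q_h'\right)\right| \le \frac{1}{\nu}\|p - \Psi^{k-1} p\|_{\Omega}\,\|q_h'\|_{\Omega} \le \frac{1}{\sqrt{\nu}}\|p - \Psi^{k-1} p\|_{\Omega}\,|||\boldsymbol{\phi_h}|||_h,
\]
where the last step uses $\tfrac{1}{\sqrt{\nu}}\|q_h'\|_{\Omega} \le |||\boldsymbol{\phi_h}|||_h$ from \eqref{eq:TVNF_norm_full}. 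Together with $\left|A\left(\boldsymbol{\eta}, \boldsymbol{\phi_h}\right)\right| \le C\,|||\boldsymbol{\eta}|||_h\,|||\boldsymbol{\phi_h}|||_h$, dividing by $|||\boldsymbol{\phi_h}|||_h$ and feeding the bound back into \eqref{eq:stabilisation_inf_sup} yields $\beta\,|||\boldsymbol{e_h}|||_h \le C\,|||\boldsymbol{\eta}|||_h + \tfrac{1}{\sqrt{\nu}}\|p - \Psi^{k-1} p\|_{\Omega}$. Combining with the triangle inequality and taking the infimum over $\left(\boldsymbol{v_h}, \tilde{v}_h, q_h\right)$ then produces \eqref{eq:stabilisation_Cea}.

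The main obstacle I anticipate is that Lemma~\ref{l:stabilisation_continuity} is stated only for discrete arguments, whereas $\boldsymbol{\eta}$ carries the non-discrete exact solution $\boldsymbol{u}$. I would therefore need to verify that $|||\cdot|||_h$ is finite on $\boldsymbol{\eta}$ — this is exactly where the regularity hypothesis $\boldsymbol{u} \in \left[H^2\left(\mathcal{T}_h\right)\right]^2$ is used, since it guarantees that the element-wise normal-derivative traces of $\boldsymbol{u}$ entering \eqref{eq:TVNF_norm} are well defined — and that the continuity of the individual forms $a$, $b$ and $s$ (from the cited references) extends to this larger space. Once this is secured, all remaining steps are routine applications of Cauchy--Schwarz and the boundedness of the $L^2$-projections.
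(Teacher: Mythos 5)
Your proposal is correct and follows essentially the same route the paper intends: the paper's proof is only a one-line sketch ("a combination of Lemmas~\ref{l:stabilisation_weak_inf_sup}, \ref{l:stabilisation_continuity} and~\ref{l:stabilisation_inf_sup}", with details deferred to a thesis), and your Strang-type argument --- splitting the error, applying the discrete inf-sup condition~\eqref{eq:stabilisation_inf_sup} to the discrete part, invoking the perturbed Galerkin orthogonality of Lemma~\ref{l:stabilisation_weak_consistency}, and bounding the consistency term $s(p,q_h')$ by $\nu^{-1/2}\|p-\Psi^{k-1}p\|_{\Omega}\,|||\cdot|||_h$ --- is exactly that combination spelled out. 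Your caveat about extending the continuity bound to the non-discrete argument $\boldsymbol{\eta}$ is well placed and is resolved precisely by the assumed $H^2(\mathcal{T}_h)$ regularity, as you note.
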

%%%%%%%%%%%%%%%%%%%%%%%%%%%%%%%%%%%%%%%%%%%%%%%%%%
%% Cea's proof
\begin{proof}
It is a combination of Lemmas~\ref{l:stabilisation_weak_inf_sup},~\ref{l:stabilisation_continuity} and~\ref{l:stabilisation_inf_sup}. For details see~\cite{bosy2017efficient}.
\end{proof}
% %%%%%%%%%%%%%%%%%%%%%%%%%%%%%%%%%%%%%%%%%%%%%%%%%%

\begin{lemma}
\label{l:stabilisation_HDG_error_high_order}
Let $\left(\boldsymbol{u},p\right) \in \left[H^1\left(\Omega\right) \cap H^2\left(\mathcal{T}_h\right)\right]^2 \times H^k\left(\Omega\right)$ be a solution of~\eqref{eq:stokes_TVNF}, $\tilde{u}=u_t$ on all edges in $\mathcal{E}_h$, and $\left(\boldsymbol{u_h}, \tilde{u}_h,p_h\right) \in \boldsymbol{V_h} \times Q^k_h$ solves the discrete problem~\eqref{eq:TVNF_stabilisation_weak_stokes_one_line}. 
Then there exists $C > 0$, independent of $h$ and $\nu$, such that
\begin{equation*}
|||\left({\boldsymbol{u}}-\boldsymbol{u_h}, \tilde{u}- \tilde{u}_h, p - p_h\right)|||_h \leq C h^k \left(\sqrt{\nu} \|{\boldsymbol{u}}\|_{H^{k+1}\left(\mathcal{T}_h\right)} + \frac{1}{\sqrt{\nu}} \|p\|_{H^k\left(\mathcal{T}_h\right)} \right).
\end{equation*}
\end{lemma}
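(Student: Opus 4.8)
The plan is to combine the variant of C\'ea's lemma established in Lemma~\ref{l:stabilisation_Cea} with the approximation properties of the interpolation operators introduced in Section~\ref{sec:notation_stabilisation}. Starting from~\eqref{eq:stabilisation_Cea}, it suffices to exhibit one admissible triple $\left(\boldsymbol{v_h}, \tilde{v}_h, q_h\right) \in \boldsymbol{V_h} \times Q_h^k$ for which the right-hand side infimum is of order $h^k$, and then to bound the consistency remainder $\frac{C}{\sqrt{\nu}}\|p - \Psi^{k-1}p\|_{\Omega}$ in the same way. A natural choice is $\boldsymbol{v_h} = \Pi^k \boldsymbol{u}$, $q_h = \Psi^k p$, and $\tilde{v}_h = \Phi^{k-1}(u_t)$; the last is admissible in $M_{h,0}^{k-1}$ because the boundary condition $u_t = 0$ on $\Gamma$ forces $\Phi^{k-1}(u_t) = 0$ on the boundary edges. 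With this choice I would estimate the three groups of terms in the norm $|||\cdot|||_h$ separately.

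For the $\sqrt{\nu}$-weighted velocity contributions, the interior seminorm term is controlled directly by the BDM estimate $|\boldsymbol{u} - \Pi^k\boldsymbol{u}|_{H^1(K)} \leq C h_K^k |\boldsymbol{u}|_{H^{k+1}(K)}$ from~\cite[Proposition 2.5.1]{MR3097958}. Writing $\boldsymbol{\varphi} := \boldsymbol{u} - \Pi^k\boldsymbol{u}$, the edge term $h_K \|\boldsymbol{\partial_n}\boldsymbol{\varphi}\|_{\partial K}^2$ is handled with the scaled trace inequality $\|\psi\|_{\partial K}^2 \leq C\left(h_K^{-1}\|\psi\|_K^2 + h_K |\psi|_{H^1(K)}^2\right)$ applied to the first derivatives of $\boldsymbol{\varphi}$, which combined with the $H^1$- and $H^2$-approximation estimates yields a bound by $C h_K^{2k}|\boldsymbol{u}|_{H^{k+1}(K)}^2$. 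For the jump term, the linearity of $\Phi^{k-1}$ together with the choice $\tilde{v}_h = \Phi^{k-1}(u_t)$ gives $\Phi^{k-1}\big((\Pi^k\boldsymbol{u})_t - \tilde{v}_h\big) = \Phi^{k-1}\big((\boldsymbol{u} - \Pi^k\boldsymbol{u})_t\big)$, so the $L^2(E)$-stability of $\Phi^{k-1}$ reduces it to $\|\boldsymbol{\varphi}\|_{\partial K}^2$, which by the same trace inequality is bounded by $C h_K^{2k-1}|\boldsymbol{u}|_{H^{k+1}(K)}^2$; the prefactor $\tau/h_K$ then restores the correct power $h_K^{2k}$.

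For the pressure, the $\frac{1}{\sqrt{\nu}}$-weighted term is $\frac{1}{\sqrt{\nu}}\|p - \Psi^k p\|_{\Omega}$, bounded by the standard $L^2$-projection estimate $\|p - \Psi^k p\|_{\Omega} \leq C h^k |p|_{H^k(\mathcal{T}_h)}$; the consistency remainder $\frac{C}{\sqrt{\nu}}\|p - \Psi^{k-1}p\|_{\Omega}$ is bounded identically, since the $L^2$-projection onto $\mathbb{P}_{k-1}$ also approximates $H^k$ functions to order $h^k$. Summing the local estimates over $K \in \mathcal{T}_h$, using that the mesh is regular so that $h_K \leq h$ uniformly, and collecting the $\sqrt{\nu}$- and $\frac{1}{\sqrt{\nu}}$-weighted contributions, one arrives at the claimed bound $C h^k\big(\sqrt{\nu}\|\boldsymbol{u}\|_{H^{k+1}(\mathcal{T}_h)} + \frac{1}{\sqrt{\nu}}\|p\|_{H^k(\mathcal{T}_h)}\big)$.

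I expect the main obstacle to be the velocity edge terms, namely the $\boldsymbol{\partial_n}$ contribution and the projected jump, since these are the only places where the trace inequality must be combined with the BDM and edge-projection approximation properties to recover the sharp power $h_K^k$; the interior seminorm and the pressure terms are immediate. A secondary point requiring care is verifying that $\tilde{v}_h = \Phi^{k-1}(u_t)$ indeed lies in the constrained space $M_{h,0}^{k-1}$, which, as noted above, follows from the homogeneous boundary condition $u_t = 0$ on $\Gamma$.
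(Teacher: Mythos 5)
Your proof is correct and follows essentially the same route as the paper, whose own (one-line) proof combines Lemma~\ref{l:stabilisation_Cea} with the velocity interpolation estimates in the $|||\cdot|||$ norm from \cite[Lemma 3.8]{barrenechea2018hybrid} (which you re-derive explicitly via the BDM, trace, and edge-projection estimates) and the local $L^2$-projection approximation for the pressure and consistency terms. One small arithmetic slip: the intermediate bound for $\left\|\boldsymbol{u}-\Pi^k\boldsymbol{u}\right\|_{\partial K}^2$ obtained from the scaled trace inequality should be $C h_K^{2k+1}\left|\boldsymbol{u}\right|_{H^{k+1}(K)}^2$ rather than $C h_K^{2k-1}\left|\boldsymbol{u}\right|_{H^{k+1}(K)}^2$; it is the former that gives the stated power $h_K^{2k}$ after multiplication by $\tau/h_K$.
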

%%%%%%%%%%%%%%%%%%%%%%%%%%%%%%%%%%%%%%%%%%%%%%%%%%
%% hdG norm proof
\begin{proof}
It is a combination of~\cite[Lemmas 3.8]{barrenechea2018hybrid} and Lemma~\ref{l:stabilisation_Cea} with the local $L^2$-projection approximation~\cite[Theorem 1.103]{MR2050138}.
\end{proof}

%%%%%%%%%%%%%%%%%%%%%%%%%%%%%%%%%%%%%%%%%%%%%%%%%%%%%%%%%%%%%%%%%%%%%%%%%%%%%%%%%%%%%%%%%%%%%%%%%%%%%%%%%%%%
% Numerics
%%%%%%%%%%%%%%%%%%%%%%%%%%%%%%%%%%%%%%%%%%%%%%%%%%%%%%%%%%%%%%%%%%%%%%%%%%%%%%%%%%%%%%%%%%%%%%%%%%%%%%%%%%%%

\section{Numerical experiments}
\label{sec:hdG_numerics}

The computational domain is the unit square $\Omega = \left(0,1\right)^2$. We present the results for
$k=1$, that is the discrete space is given by $\boldsymbol{BDM_h^1} \times M_{h,0}^0 \times {Q_h^1}$.  We test both the symmetric method
($\varepsilon=-1$) and the non-symmetric method ($\varepsilon=1$). We have followed the recommendation given in \cite[Section~2.5.2]{Lehrenfeldhesis} and taken $\tau =6$. We choose the right hand side $\boldsymbol{f}$ and the boundary condition  $g$ such that the exact solution is given by
\begin{align*}
\boldsymbol{u} = \mbox{curl} \left[\left(1-\cos((1-x)^2)\right)\sin(x^2)\sin(y^2)\left(1-\cos((1-y)^2)\right)\right], &&
p = \tan(xy).
\end{align*}

\begin{figure}[!h]
\centering
    \subfloat[Symmetric bilinear form ($\varepsilon = -1$)\label{fig:Stab_sym_TVNF}]{%
      \includegraphics[width=0.5\textwidth]{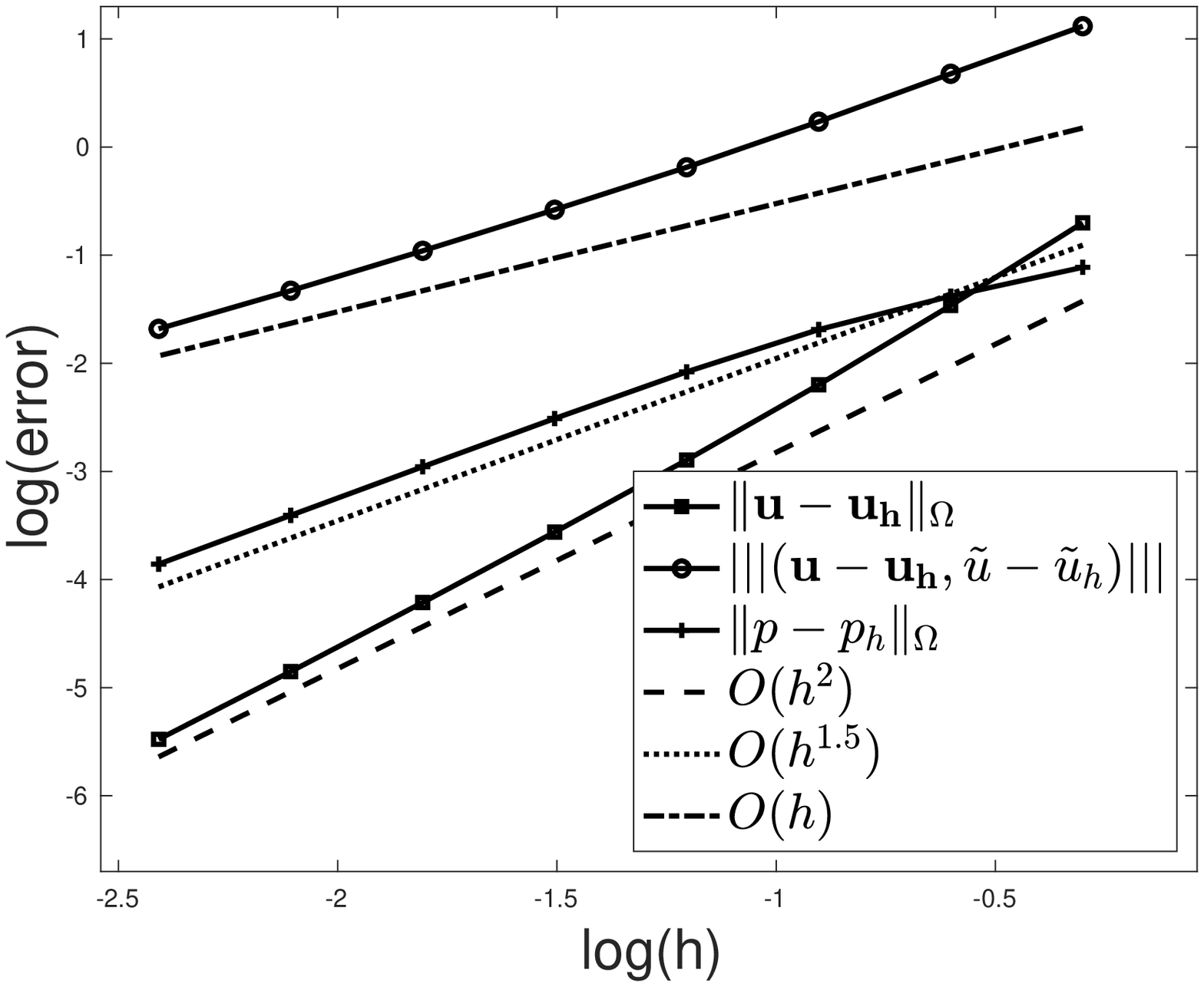}
    }
    \subfloat[Non-symmetric bilinear form ($\varepsilon = 1$)\label{fig:Stab_non_TVNF}]{%
      \includegraphics[width=0.5\textwidth]{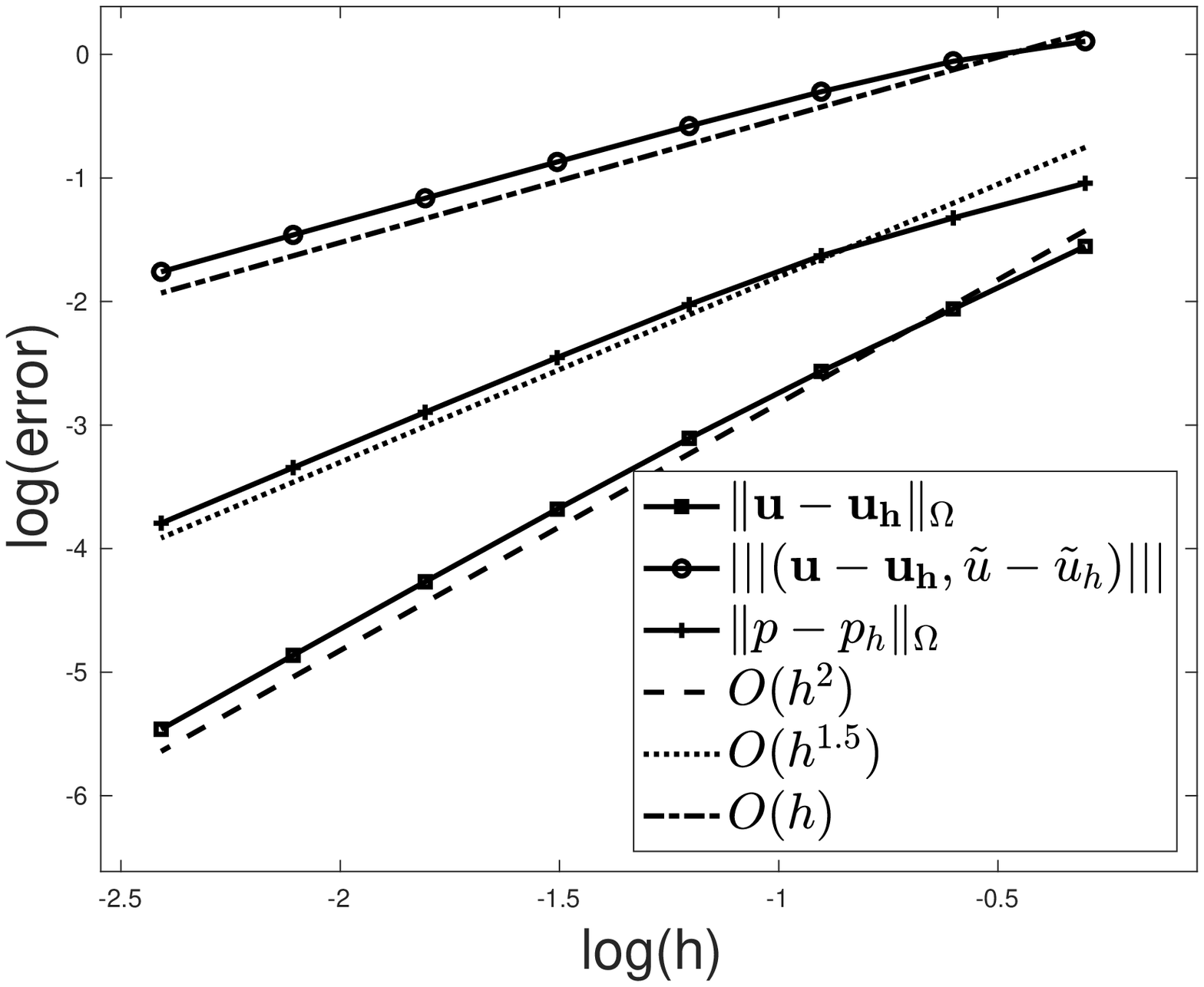}
    }
\caption{Convergence the stabilised method with $k=1$.}
  \end{figure}
In Figure~\ref{fig:Stab_sym_TVNF} and \ref{fig:Stab_non_TVNF} we depict the errors for both the
symmetric and non-symmetric cases, respectively. We can see that they not only validate the theory from Section~\ref{sec:TVNF_stabilisation_error}, but also perform an optimal $h^2$ convergence rate for $\|\boldsymbol{u}-\boldsymbol{u_h}\|_{\Omega}$. Furthermore, we observe an increased order of
convergence for $\|p - p_h\|_{\Omega}$. In fact, the error seems to decrease with $O(h^{3/2})$, rather than the $O(h)$ predicted by the theory.

{\renewcommand\arraystretch{1.1} 
\begin{table}[!h]
\caption{Comparison of the error of the pressure  $||p-p_h||_\Omega$}
\label{tab:Stab_Poiseuille} 
\centering
\begin{adjustbox}{max width=0.85\textwidth}
\begin{tabular}{c | c c | c c}
 & \multicolumn{2}{c | }{\textbf{Symmetric bilinear form (${\varepsilon = -1}$)}} & \multicolumn{2}{c}{\textbf{Non-symmetric bilinear form  (${\varepsilon = 1}$)}} \\
\hline
  $\mathbf{h}$ & $p_h \in Q_h^0$ & $p_h \in Q_h^1$ & $p_h \in Q_h^0$ & $p_h \in Q_h^1$ \\
  \hline
$\mathbf{2^{-1}}$ & 0.152296 & 0.077228 & 0.159019 & 0.090624 \\
$\mathbf{2^{-2}}$ & 0.082775 & 0.041790 & 0.084875 & 0.047488 \\
$\mathbf{2^{-3}}$ & 0.042620 & 0.020500 & 0.043313 & 0.009449 \\
$\mathbf{2^{-4}}$ & 0.021357 & 0.008338 & 0.021513 & 0.003516 \\
$\mathbf{2^{-5}}$ & 0.010676 & 0.003083 & 0.010707 & 0.001269 \\
$\mathbf{2^{-6}}$ & 0.005340 & 0.001105 & 0.005346 & 0.002171 \\
$\mathbf{2^{-7}}$ & 0.002671 & 0.000392 & 0.002672 & 0.000453 \\
$\mathbf{2^{-8}}$ & 0.001336 & 0.000139 & 0.001336 & 0.000161 \\
\end{tabular}
\end{adjustbox}
\end{table}

To stress the last point made in the previous paragraph, in Table~\ref{tab:Stab_Poiseuille} we compare the $L^2$ error of the pressure ($||p-p_h||_\Omega$) for hdG method introduced in~\cite{barrenechea2018hybrid} and stabilised hdG method from Section~\ref{sec:TVNF_stabilisation}. Columns $p_h \in Q_h^0$ are associated with hdG method and $p_h \in Q_h^1$ with stabilised hdG ones. There, we confirm that the pressure error for the stabilised version is
much smaller than the one for the inf-sup stable case, in addition to having an increased order of convergence. 

%%%%%%%%%%%%%%%%%%%%%%%%%%%%%%%%%%%%%%%%%%%%%%%%%%%%%%%%%%%%%%%%%%%%%%%%%%%%%%%%%%%%%%%%%%%%%%%%%%%%%%%%%%%%
% Conclusion
%%%%%%%%%%%%%%%%%%%%%%%%%%%%%%%%%%%%%%%%%%%%%%%%%%%%%%%%%%%%%%%%%%%%%%%%%%%%%%%%%%%%%%%%%%%%%%%%%%%%%%%%%%%%

\section{Conclusion}

In this work we have applied the idea introduced in~\cite{MR2079895} to stabilise the hdG method proposed
in \cite{barrenechea2018hybrid} for the Stokes problem with TVNF boundary conditions. The method adds a simple, symmetric,
term to the formulation, and allowed us to use a higher order pressure space, which, in turn, improved
the pressure convergence (although a proof of this fact is, in general, not available).  This approach was also applied to NVTF boundary conditions (see~\cite{bosy2017efficient}) and can be used for other discontinuous Galerkin methods that deal with Stokes or nearly incompressible elasticity problems. 

Future testing using higher order discretisations is needed to assess whether this approach provides an increase of the convergence rate for the pressure. Thus, the numerical tests with higher order of polynomials for discontinuous finite methods is interest for further research to look for the improvement of the convergence. 

\bibliographystyle{abbrv}
\bibliography{Stabilised_hdG}
\end{document}